\title{A polynomial approach to Carlitz's $q$-Bernoulli numbers}
\author[1]{\sc Mohamed Mouzaia}
\author[2]{\sc Bakir FARHI\thanks{Correspending author.}}
\affil[1]{Laboratoire de Math\'ematiques appliqu\'ees,
Facult\'e des Sciences Exactes,
Universit\'e de Bejaia, 06000 Bejaia, Algeria}
\affil[2]{National Higher School of Mathematics,
P.O.Box 75, Mahelma 16093, Sidi Abdellah (Algiers),
Algeria}
\date{}
\let\up=\textsuperscript
\let\epsilon=\varepsilon
\def\R{{\mathbb R}}
\def\N{{\mathbb N}}
\def\idem{\leavevmode\hbox to 10.6mm{\vrule height .63ex depth -.59ex
    width 10mm\hfill}}
\theoremstyle{plain}
\numberwithin{equation}{section}
\newtheorem{thm}{Theorem}[section]
\newtheorem{prop}[thm]{Proposition}
\newtheorem{coll}[thm]{Corollary}
\theoremstyle{definition}
\newtheorem{nota}[thm]{Notation}
\theoremstyle{remark}
\newtheorem{rmk}[thm]{Remark}
\newtheorem{expls}[thm]{Examples}
\newcommand\blfootnote[1]{%
  \begingroup
  \renewcommand\thefootnote{}%
  \footnotetext{#1}%
  \endgroup
}
\begin{document}
\maketitle

\blfootnote{\emph{E-mail addresses}: mohamed.mouzaia@univ-bejaia.dz (M. Mouzaia), bakir.farhi@nhsm.edu.dz (B. Farhi).}

\begin{abstract}
This paper investigates $q$-analogues of the classical Bernoulli polynomials and numbers. We introduce a new polynomial sequence $(B_{n , q}(X))_{n \in \N_0}$, defined via the Jackson integral, and explore its connections with Carlitz's $q$-Bernoulli polynomials and numbers. 
Specifically, we prove that the numbers $B_{n , q}(0)$ are exactly the Carlitz $q$-Bernoulli numbers and that the polynomials $B_{n , q}(X)$ are genuine $q$-analogues of the classical Bernoulli polynomials. This approach leverages the Jackson integral to reformulate Carlitz’s $q$-Bernoulli numbers in terms of classical polynomial structures, offering new insights into their properties. 
\end{abstract}

\noindent\textbf{MSC 2020:} Primary 05A30; Secondary 11B68. \\
\textbf{Keywords:} Bernoulli polynomials and numbers, Carlitz's $q$-Bernoulli polynomials and numbers, $q$-calculus, $q$-derivative, $q$-antiderivative, Jackson integral.

\section{Introduction and Notation}\label{sec1}

Throughout this paper, we let $\mathbb{N}$ and $\mathbb{N}_0$ denote the sets of positive and nonnegative integers, respectively. For $n \in \mathbb{N}_0$, we let $\mathbb{R}_n[X]$ denote the $\mathbb{R}$-vector subspace of $\mathbb{R}[X]$ consisting of polynomials of degree at most $n$. The Bernoulli polynomials and numbers, denoted $B_n(X)$ and $B_n$ ($n \in \mathbb{N}_0$), are defined via their exponential generating functions:
\[
\frac{t}{e^t - 1} e^{X t} = \sum_{n=0}^{\infty} B_n(X) \frac{t^n}{n!}, \quad \frac{t}{e^t - 1} = \sum_{n=0}^{\infty} B_n \frac{t^n}{n!},
\]
so that $B_n = B_n(0)$ for all $n \in \mathbb{N}_0$. These generating functions yield several key properties of the Bernoulli polynomials and numbers, including:
\begin{align}
B_n(X) &= \sum_{k=0}^n \binom{n}{k} B_k X^{n-k}, \label{eq2} \\
\int_X^{X+1} B_n(t) \, dt &= X^n, \label{eq3}
\end{align}
for all $n \in \mathbb{N}_0$. Notably, the second formula \eqref{eq3} characterizes the $n$-th Bernoulli polynomial: for all $n \in \mathbb{N}_0$, the unique real polynomial $P(X)$ satisfying $\int_X^{X+1} P(t) \, dt = X^n$ is $P(X) = B_n(X)$. As is well known, Bernoulli polynomials and numbers play a central role in number theory, combinatorics, and mathematical analysis, owing to their connections with sums of powers, zeta functions, and the Euler-Maclaurin formula \cite{zag}.

For $q > 0$, $q \neq 1$, the $q$-calculus generalizes classical calculus through the parameter $q$, recovering standard results as $q \to 1$. The $q$-analogue of a number or indeterminate $X$ is defined as $[X]_q := \frac{q^X - 1}{q - 1}$. Fundamental concepts, introduced by Jackson \cite{jac}, are the $q$-derivative and $q$-integral of a function $f$ (satisfying suitable regularity conditions), defined as:
\begin{align*}
D_q f(X) & := \frac{f(q X) - f(X)}{(q - 1) X}, \\
\int_a^b f(t) \, d_q t & := \begin{cases}
(1 - q) b \sum_{n=0}^\infty q^n f(q^n b) - (1 - q) a \sum_{n=0}^\infty q^n f(q^n a), & \text{if } 0 < q < 1, \\
(q - 1) b \sum_{n=0}^\infty q^{-n} f(q^{-n} b) - (q - 1) a \sum_{n=0}^\infty q^{-n} f(q^{-n} a), & \text{if } q > 1,
\end{cases}
\end{align*}
for all $a, b$ in a suitable domain ensuring the well-definition of the expressions. A $q$-antiderivative of a function $f$ is a function $F$ such that $D_q F = f$. The fundamental theorem of $q$-calculus \cite{kac} states that if $f$ is Jackson-integrable on $[a, b] \subset \mathbb{R}$ and $F$ is a $q$-antiderivative of $f$, continuous at $0$, then $\int_a^b f(t) \, d_q t = F(b) - F(a)$.

Next, we define a real $q$-polynomial as any polynomial in $[X]_q$ with real coefficients, equivalently a real polynomial in $q^X$. The degree of a $q$-polynomial is its degree as a polynomial in $[X]_q$ (or $q^X$). The set of all real $q$-polynomials forms an $\mathbb{R}$-vector space with principal bases $([X]_q^k)_{k \in \mathbb{N}_0}$ and $(q^{k X})_{k \in \mathbb{N}_0}$. For $n \in \mathbb{N}_0$, the set of real $q$-polynomials of degree at most $n$ is an $\mathbb{R}$-vector subspace with principal bases $([X]_q^k)_{0 \leq k \leq n}$, $(q^{k X})_{0 \leq k \leq n}$, and $(q^{k X} [X]_q^{n-k})_{0 \leq k \leq n}$, the latter being particularly important for our purposes. Like classical polynomials, two real $q$-polynomials that coincide on an infinite subset of $\mathbb{R}$ are identical (for $q > 0$, $q \neq 1$). To distinguish polynomials (resp. $q$-polynomials) from numbers, we consistently indicate the indeterminate $X$ for polynomials (resp. $q$-polynomials).

In $q$-calculus, some authors investigate $q$-analogues of well-known polynomial sequences, such as Bernoulli, Euler, and Chebyshev polynomials, in the form of $q$-polynomial sequences. For example, in \cite{car}, Carlitz introduced the $q$-polynomial sequences $(\eta_{n,q}(X))_{n \in \mathbb{N}_0}$ and \linebreak $(\beta_{n,q}(X))_{n \in \mathbb{N}_0}$, defined by:
\begin{align}
\eta_{n,q}(X) &= (q - 1)^{-n} \sum_{k=0}^n (-1)^{n-k} \binom{n}{k} \frac{k}{[k]_q} q^{k X}, \label{eq4} \\
\beta_{n,q}(X) &= (q - 1)^{-n} \sum_{k=0}^n (-1)^{n-k} \binom{n}{k} \frac{k + 1}{[k + 1]_q} q^{k X}, \label{eq5}
\end{align}
for all $n \in \mathbb{N}_0$, with the convention $\frac{k}{[k]_q} = 1$ for $k = 0$. Carlitz showed that the second $q$-polynomial sequence $(\beta_{n , q}(X))_{n \in \N_0}$ converges to the classical Bernoulli polynomial sequence as $q \to 1$, i.e., $\lim_{q \to 1} \beta_{n,q}(X) = B_n(X)$ for all $n \in \mathbb{N}_0$. He also considered the values of $\eta_{n , q}(X)$ and $\beta_{n , q}(X)$ at $X = 0$, respectively denoted by $\eta_{n , q}$ and $\beta_{n , q}$, and proved the important formulas:
\begin{align}
\eta_{n,q}(X) &= \sum_{k=0}^n \binom{n}{k} \eta_{k,q} q^{k X} [X]_q^{n-k}, \label{eq6} \\
\beta_{n,q}(X) &= \sum_{k=0}^n \binom{n}{k} \beta_{k,q} q^{k X} [X]_q^{n-k}, \label{eq7}
\end{align}
for all $n \in \mathbb{N}_0$. Thus, $\beta_{n,q}$ are $q$-analogues of the Bernoulli numbers $B_n$, and Formula \eqref{eq7} is a $q$-analogue of Formula \eqref{eq2}. For instance, the first four Carlitz $q$-Bernoulli numbers are
\begin{equation}\label{eq8}
\beta_{0 , q} = 1 ~,~ \beta_{1 , q} = - \dfrac{1}{[2]_q} ~,~ \beta_{2 , q} = \dfrac{q}{[2]_q [3]_q} ~,~ \text{ and } \beta_{3 , q} = - \dfrac{q (q - 1)}{[3]_q [4]_q} . 
\end{equation}
For further reading on Carlitz's $q$-Bernoulli numbers and polynomials and their extensions, see, e.g., \cite{far}.

This paper introduces, via the Jackson integral, a new sequence of polynomials, \linebreak $(B_{n,q}(X))_{n \in \N_0}$, and studies its relationship with Carlitz’s $q$-polynomial sequences $(\eta_{n,q}(X))_{n \in \mathbb{N}_0}$ and $(\beta_{n,q}(X))_{n \in \mathbb{N}_0}$. Our objectives are twofold: first, to establish that $B_{n,q}(0) = \beta_{n,q}$ for all $n \in \mathbb{N}_0$; second, to prove that $B_{n,q}(X)$ are genuine $q$-analogues of the Bernoulli polynomials, that is, $\lim_{q \to 1} B_{n,q}(X) = B_n(X)$ for all $n \in \N_0$. This approach provides a novel perspective on Carlitz’s $q$-Bernoulli numbers $\beta_{n,q}$ by leveraging the Jackson integral to reformulate their associated $q$-polynomial sequences in terms of classical polynomial structures.

\section{The results and the proofs}

Our \textit{$q$-Bernoulli polynomials} $B_{n , q}(X)$ are defined through the following proposition.

\begin{prop}\label{p1}
For all $n \in \N_0$, there exists a unique real polynomial $B_{n , q}(X)$ satisfying:
\begin{equation}\label{eq1}
\int_{X}^{q X + 1} B_{n , q}(t) \, d_q t = (q - 1) X^{n + 1} + X^n .
\end{equation}
\end{prop}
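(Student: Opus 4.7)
The plan is to view the map $T : \R_n[X] \to \R_{n+1}[X]$ defined by $T(P)(X) := \int_X^{qX+1} P(t)\,d_q t$ as a linear operator and to prove existence and uniqueness of $B_{n,q}(X)$ by a combination of a rank argument and a hyperplane condition.

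First, I would compute $T$ on the monomial basis. Since $\frac{X^{k+1}}{[k+1]_q}$ is a $q$-antiderivative of $X^k$ (continuous at $0$), the fundamental theorem of $q$-calculus gives
\[
T(X^k)(X) \;=\; \frac{(qX+1)^{k+1} - X^{k+1}}{[k+1]_q},
\]
a polynomial whose degree is exactly $k+1$ (its leading coefficient is $(q^{k+1}-1)/[k+1]_q = q-1 \neq 0$). By triangularity, $\bigl(T(X^k)\bigr)_{0 \le k \le n}$ is linearly independent in $\R_{n+1}[X]$, so $T$ is injective and $\operatorname{Im}(T)$ has dimension $n+1$.

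Next, I would identify $\operatorname{Im}(T)$ with a concrete hyperplane of $\R_{n+1}[X]$. The key observation is that $X_0 := \frac{1}{1-q}$ is the fixed point of the affine map $X \mapsto qX+1$; hence for any polynomial $P$,
\[
T(P)(X_0) \;=\; \int_{X_0}^{X_0} P(t)\,d_q t \;=\; 0.
\]
Thus $\operatorname{Im}(T) \subseteq H := \{Q \in \R_{n+1}[X] : Q(X_0) = 0\}$, and since both spaces have dimension $n+1$, we get equality $\operatorname{Im}(T) = H$.

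Finally, I would verify that the right-hand side of \eqref{eq1} lies in $H$: a direct substitution yields
\[
(q-1)X_0^{n+1} + X_0^n \;=\; \frac{q-1}{(1-q)^{n+1}} + \frac{1}{(1-q)^n} \;=\; -\frac{1}{(1-q)^n} + \frac{1}{(1-q)^n} \;=\; 0.
\]
Hence $(q-1)X^{n+1} + X^n \in \operatorname{Im}(T)$, which gives existence, and injectivity of $T$ gives uniqueness. The main subtlety to be careful about is confirming that $T$ really sends $\R_n[X]$ into $\R_{n+1}[X]$ (not just into a space of $q$-polynomials) — this is ensured because polynomial integrands admit polynomial $q$-antiderivatives, so the Jackson integral $F(qX+1) - F(X)$ evaluates to an ordinary real polynomial in $X$.
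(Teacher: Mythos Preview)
Your proof is correct and follows the same overall architecture as the paper's: you study the linear operator $P \mapsto \int_X^{qX+1} P(t)\,d_qt$ from $\R_n[X]$ into $\R_{n+1}[X]$, identify its image with the hyperplane $H = \{Q : Q(1/(1-q)) = 0\}$ via the fixed-point observation $q X_0 + 1 = X_0$, and check that the right-hand side $(q-1)X^{n+1} + X^n$ lies in $H$.

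The one substantive difference is how injectivity is established. You compute $T(X^k) = \frac{(qX+1)^{k+1} - X^{k+1}}{[k+1]_q}$ explicitly and read off injectivity from the triangular (degree-increasing) structure of the image of the monomial basis. The paper instead argues that if $\varphi_n(P) = 0$, then evaluating at $X = [k]_q$ and telescoping gives $\int_0^{[N]_q} P(t)\,d_qt = 0$ for all $N \in \N$, so the polynomial $q$-antiderivative $Q$ of $P$ with $Q(0)=0$ has infinitely many roots and must vanish identically. Your route is more elementary and self-contained; the paper's route avoids any explicit computation and, more importantly, previews exactly the evaluation-and-summation trick at $X = [k]_q$ that drives the later key results (Theorem~\ref{t1} in particular).
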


\begin{proof}
Fix $n \in \N_0$. Comparing the degrees of the polynomials on both sides of \eqref{eq1}, we see that if $B_{n , q}(X)$ exists, it must be of degree $n$; so $B_{n , q}(X) \in \R_n[X]$. To establish the existence and the uniqueness of $B_{n , q}(X)$ in $\R_n[X]$, consider the $\R$-vector subspace $E_n$ of $\R_{n + 1}[X]$ defined by:
$$
E_n := \left\{P \in \R_{n + 1}[X] :~ P\left(\frac{1}{1 - q}\right) = 0\right\}
$$
and the linear mapping $\varphi_n$ defined by:
$$
\begin{array}{rcl}
\varphi_n :~ \R_n[X] & \longrightarrow & E_n \\
P & \longmapsto & \int_{X}^{q X + 1} P(t) \, d_q t .
\end{array}
$$
We claim that $\varphi_n$ is an isomorphism of $\R$-vector spaces. First, we show that $\varphi_n$ is injective. Suppose $P \in \ker{\varphi_n}$, so that
$$
\varphi_n(P) = \int_{X}^{q X + 1} P(t) \, d_q t = 0_{\R[X]} .
$$
Evaluating at $X = [k]_q$ ($k \in \N_0$), we have $q [k]_q + 1 = [k + 1]_q$, so
$$
\int_{[k]_q}^{[k + 1]_q} P(t) \, d_q t = 0 .
$$
Summing these integrals from $k = 0$ to $k = N - 1$ for $N \in \N$, we obtain
$$
\int_{0}^{[N]_q} P(t) \, d_q t = 0 \quad (\forall N \in \N) .
$$
Let $Q$ be the polynomial $q$-antiderivative of $P$ such that $Q(0) = 0$, i.e., $Q(X) = \int_{0}^{X} P(t) \, d_q t$. Then, the above implies $Q([N]_q) = 0$ for all $N \in \N$. Since the points $[N]_q$ ($N \in \N$) are pairwise distinct (because $q > 0$ and $q \neq 1$), it follows that $Q = 0_{\R[X]}$. Thus $P = D_q Q = 0_{\R[X]}$. Hence $\ker{\varphi_n} = \{0_{\R_n[X]}\}$, and $\varphi_n$ is injective. Furthermore, since $\dim{\R_n[X]} = \dim{E_n} = n + 1$, then $\varphi_n$ is an isomorphism, as we claimed it to be. Consequently, the polynomial $(q - 1) X^{n + 1} + X^n = X^n \left(1 - (1 - q) X\right)$ (which clearly belongs to $E_n$) has a unique preimage by $\varphi_n$. This ensures the existence and the uniqueness of $B_{n , q}(X)$ in $\R_n[X]$ and completes the proof. 
\end{proof}

\begin{rmk}
Assuming the existence of the limit $\lim_{q \to 1} B_{n , q}(t)$, which we provisionally denote by $\widetilde{B}_n(t)$, for $n \in \N_0$, by setting $q \to 1$ in Formula \eqref{eq1}, we obtain   
$$
\int_{X}^{X + 1} \widetilde{B}_n(t) \, d t = X^n ,
$$
which is one of the characterizing properties of the $n$\up{th} Bernoulli polynomial $B_n(t)$. Thus, $\lim_{q \to 1} B_{n , q}(t) = B_n(t)$, justifying the terminology ``$q$-Bernoulli polynomials'' for the $B_{n , q}(t)$'s. However, establishing the existence of $\lim_{q \to 1} B_{n , q}(t)$ for $n \in \N_0$ is far from trivial. At the end of the paper, we will rigorously show (by another technique) that indeed $\lim_{q \to 1} B_{n , q}(t) = B_n(t)$ for all $n \in \N_0$ (see Theorem \ref{t2}).
\end{rmk}

\begin{nota}
For all $n \in \N_0$, we denote by $F_{n , q}(X)$ the polynomial given by
$$
F_{n , q}(X) := \int_{0}^{X} B_{n , q}(t) \, d_q t ,
$$
which is a $q$-antiderivative of $B_{n , q}(X)$ satisfying $F_{n , q}(0) = 0$.
\end{nota}

\begin{expls}
To compute the polynomials $B_{n , q}(X)$ explicitly for $n \in \N_0$, we express $B_{n , q}(X) = \sum_{k = 0}^{n} a_{n , q}^{(k)} X^k$, where $a_{n , q}^{(k)} \in \R$ for $k = 0 , 1 , \dots , n$. Substituting into \eqref{eq1} and equating coefficients of the polynomials on both sides yields a Cramer system of $(n + 1)$ equations in the $(n + 1)$ unknowns $a_{n , q}^{(k)}$ ($k = 0 , 1 , \dots , n$). Solving this system gives the polynomial $B_{n , q}(X)$. For example, we obtain
\begin{align*}
B_{0 , q}(X) & = 1 , \\
B_{1 , q}(X) & = - \frac{1}{q + 1} + X , \\
B_{2 , q}(X) & = \frac{q}{(q + 1) (q^2 + q + 1)} - \frac{2 q + 1}{q^2 + q + 1} X + X^2 .
\end{align*}
\end{expls}

Our $q$-Bernoulli numbers are naturally defined as $B_{n , q} := B_{n , q}(0)$ for $n \in \N_0$. Thus, we have
$$
B_{0 , q} = 1 , \quad B_{1 , q} = - \frac{1}{[2]_q} , \quad B_{2 , q} = \frac{q}{[2]_q [3]_q} .
$$
For $n = 0 , 1 , 2$, we observe that the numbers $B_{n , q}$ coincide with the Carlitz $q$-Bernoulli numbers $\beta_{n , q}$ (see \eqref{eq8}). In the following, we will prove that this holds for all $n \in \N_0$ and establish a general connection between the polynomials $B_{n , q}(X)$ and the Carlitz $q$-Bernoulli polynomials $\beta_{n , q}(X)$. To this end, we first introduce some preliminary results. The following proposition is taken from \cite{far}, and for the sake of completeness, we include its proof here.

\begin{prop}[{\cite[Corollary 2.13]{far}}]\label{p2}
For all positive integers $n$ and $N$, we have
\[
\sum_{k = 0}^{N - 1} q^k [k]_q^{n - 1} = \dfrac{\eta_{n , q}(N) - \eta_{n , q}}{n} .
\]
\end{prop}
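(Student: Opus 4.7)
The plan is to expand the right-hand side using the explicit formula \eqref{eq4} for $\eta_{n,q}$ and reduce it, via standard binomial manipulations, to the sum on the left. First I would write
\[
\eta_{n,q}(N) - \eta_{n,q} = (q-1)^{-n} \sum_{k=0}^{n} (-1)^{n-k} \binom{n}{k} \frac{k}{[k]_q} \bigl(q^{kN} - 1\bigr),
\]
and observe that the $k=0$ term vanishes (it contributes $q^{0 \cdot N} - 1 = 0$), so the summation may be restricted to $1 \le k \le n$, where the convention on $\frac{k}{[k]_q}$ plays no role.

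Next I would use the factorisation $q^{kN} - 1 = (q-1)\,[k]_q \sum_{j=0}^{N-1} q^{jk}$, which reduces $\frac{k}{[k]_q}(q^{kN}-1)$ to $k(q-1)\sum_{j=0}^{N-1} q^{jk}$ and kills the offending $\frac{1}{[k]_q}$. Swapping the order of the two finite summations then yields
\[
\eta_{n,q}(N) - \eta_{n,q} = (q-1)^{-(n-1)} \sum_{j=0}^{N-1} \sum_{k=1}^{n} (-1)^{n-k} \binom{n}{k} k\, q^{jk}.
\]

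The key step, and in my view the only substantive one, is the identification of the inner sum. I would apply the elementary identity $k \binom{n}{k} = n \binom{n-1}{k-1}$, shift the index by $\ell = k-1$, and factor out $q^j$:
\[
\sum_{k=1}^{n} (-1)^{n-k} \binom{n}{k} k\, q^{jk} = n\, q^{j} \sum_{\ell=0}^{n-1} (-1)^{n-1-\ell} \binom{n-1}{\ell} q^{j\ell} = n\, q^{j} (q^{j} - 1)^{n-1}.
\]
Writing $q^{j} - 1 = (q-1)\,[j]_q$ gives $n\, q^{j}(q-1)^{n-1}[j]_q^{\,n-1}$, which cancels precisely the $(q-1)^{-(n-1)}$ prefactor. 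Dividing by $n$ produces the claimed formula.

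The only potential snag is bookkeeping at $k=0$ (because of the ad hoc convention $\frac{k}{[k]_q} = 1$ there), but this disappears automatically since $q^{0\cdot N} - 1 = 0$. Everything else is formal binomial algebra, so I expect no real difficulty.
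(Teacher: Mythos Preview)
Your proof is correct and is essentially the paper's argument run in reverse: the paper starts from the left-hand side, expands $[k]_q^{n-1}$ by the binomial formula, sums the resulting geometric series, and then uses $\binom{n-1}{i-1}=\tfrac{i}{n}\binom{n}{i}$ to recover $\eta_{n,q}(N)-\eta_{n,q}$, while you start from the right-hand side and undo those same steps. The identities invoked (the geometric-series factorisation of $q^{kN}-1$, the relation $k\binom{n}{k}=n\binom{n-1}{k-1}$, and $q^j-1=(q-1)[j]_q$) are identical, and your handling of the $k=0$ term is fine.
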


\begin{proof}
Let $n , N \in \N$ be fixed. We have
\begin{align*}
\sum_{k = 0}^{N - 1} q^{k} {[k]_q}^{n - 1} & = \sum_{k = 0}^{N - 1} q^k \left(\frac{q^k - 1}{q - 1}\right)^{n - 1} \\
& = (q - 1)^{- n + 1} \sum_{k = 0}^{N - 1} q^k \left(q^k - 1\right)^{n - 1} \\
& = (q - 1)^{- n + 1} \sum_{k = 0}^{N - 1} q^k \sum_{i = 0}^{n - 1} \binom{n - 1}{i} q^{k i} (-1)^{n - 1 - i} \quad (\text{by the binomial formula}) \\
& = (q - 1)^{- n + 1} \sum_{i = 0}^{n - 1} (-1)^{n - 1 - i} \binom{n - 1}{i} \sum_{k = 0}^{N - 1} q^{(i + 1) k} \\
& = (q - 1)^{- n + 1} \sum_{i = 0}^{n - 1} (-1)^{n - 1 - i} \binom{n - 1}{i} \frac{q^{(i + 1)N} - 1}{q^{i + 1} - 1} \\
& = (q - 1)^{- n + 1} \sum_{i = 1}^{n} (-1)^{n - i} \binom{n - 1}{i - 1} \frac{q^{i N} - 1}{q^i - 1} .
\end{align*}
Using the identities $\binom{n - 1}{i - 1} = \frac{i}{n} \binom{n}{i}$ and $q^i - 1 = (q - 1) [i]_q$ for $1 \leq i \leq n$, it follows that:
\begin{align*}
\sum_{k = 0}^{N - 1} q^{k} {[k]_q}^{n - 1} & = \frac{1}{n} (q - 1)^{- n} \sum_{i = 1}^{n} (-1)^{n - i} \binom{n}{i} \frac{i}{[i]_q} \left(q^{i N} - 1\right) \\
& = \frac{1}{n} \left(\eta_{n , q}(N) - \eta_{n , q}(0)\right) \quad (\text{by \eqref{eq4}}) ,
\end{align*}
as required.
\end{proof}

We also require the following results concerning the derivatives of certain $q$-polynomials.

\begin{prop}\label{p3}
For all positive integer $n$, we have
\[
\eta_{n , q}'(X) = n \frac{\log q}{q - 1} q^X \beta_{n - 1 , q}(X) .
\]
\end{prop}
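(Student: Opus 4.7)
The plan is to differentiate the defining expansion \eqref{eq4} of $\eta_{n,q}(X)$ term-by-term with respect to $X$, then reindex the resulting sum and recognize, via the defining expansion \eqref{eq5}, the polynomial $\beta_{n-1,q}(X)$. Since each basis function $q^{kX}$ satisfies $\frac{d}{dX}q^{kX} = k (\log q) q^{kX}$, the $k=0$ term in \eqref{eq4} drops out and I am left with
\[
\eta_{n,q}'(X) \;=\; (\log q)\,(q-1)^{-n} \sum_{k=1}^{n} (-1)^{n-k} \binom{n}{k} \frac{k^{2}}{[k]_{q}}\, q^{kX}.
\]

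Next I substitute $j = k-1$, so that $k = j+1$ runs through $1,\dots,n$ while $j$ runs through $0,\dots,n-1$. This produces $q^{(j+1)X} = q^{X}\cdot q^{jX}$, letting me factor out $q^{X}$, and changes the sign to $(-1)^{n-1-j}$. The surviving combinatorial weight is $\binom{n}{j+1}(j+1)^{2}/[j+1]_{q}$. I now apply the elementary identity
\[
\binom{n}{j+1}(j+1) \;=\; n\binom{n-1}{j},
\]
which lets me pull out the scalar $n$ and reduce the weight to $n\binom{n-1}{j}(j+1)/[j+1]_{q}$. Absorbing one $(q-1)^{-1}$ into the prefactor (to form $n\log q/(q-1)$) and keeping the remaining $(q-1)^{-(n-1)}$ with the sum, I arrive at
\[
\eta_{n,q}'(X) \;=\; n\,\frac{\log q}{q-1}\, q^{X} \cdot (q-1)^{-(n-1)} \sum_{j=0}^{n-1} (-1)^{n-1-j}\binom{n-1}{j}\frac{j+1}{[j+1]_{q}}\, q^{jX}.
\]
By \eqref{eq5}, the sum on the right is precisely $\beta_{n-1,q}(X)$, which yields the claimed identity.

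There is no substantive obstacle here; the argument is a direct computation. The only points that require care are the sign bookkeeping after the index shift (verifying $(-1)^{n-k} = (-1)^{n-1-j}$ when $k=j+1$) and the placement of the factor $(q-1)^{-1}$ when separating the prefactor $n\log q/(q-1)$ from the $(q-1)^{-(n-1)}$ that must remain with the series in order to reconstitute $\beta_{n-1,q}(X)$.
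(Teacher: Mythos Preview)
Your proof is correct and follows essentially the same route as the paper: differentiate \eqref{eq4} term by term, drop the $k=0$ term, use the identity $k\binom{n}{k}=n\binom{n-1}{k-1}$ (equivalently $\binom{n}{j+1}(j+1)=n\binom{n-1}{j}$), shift the index by one, and recognize \eqref{eq5}. The only cosmetic difference is that the paper applies the binomial identity before reindexing while you reindex first, which has no mathematical consequence.
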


\begin{proof}
From \eqref{eq4}, we have for all $n \in \N$:
\begin{align*}
\eta_{n , q}'(X) & = (q - 1)^{- n} \sum_{k = 0}^{n} (-1)^{n - k} \binom{n}{k} \frac{k}{[k]_q} \left(q^{k X}\right)' \\
& = (q - 1)^{- n} \sum_{k = 1}^{n} (-1)^{n - k} \binom{n}{k} \frac{k}{[k]_q} (k \log q) q^{k X} .
\end{align*}
Using the identity $k \binom{n}{k} = n \binom{n - 1}{k - 1}$ for $1 \leq k \leq n$, we rewrite the sum as
\begin{align*}
\eta_{n , q}'(X) & = n \frac{\log q}{q - 1} \cdot (q - 1)^{- n + 1} \sum_{k = 1}^{n} (-1)^{n - k} \binom{n - 1}{k - 1} \frac{k}{[k]_q} q^{k X} \\
& = n \frac{\log q}{q - 1} \cdot (q - 1)^{- n + 1} \sum_{k = 0}^{n - 1} (-1)^{n - k - 1} \binom{n - 1}{k} \frac{k + 1}{[k + 1]_q} q^{(k + 1) X} \\
& = n \frac{\log q}{q - 1} q^X \beta_{n - 1 , q}(X) \quad (\text{according to \eqref{eq5}}) .
\end{align*}
The proposition is proved.
\end{proof}

\begin{prop}\label{p4}
For all $n \in \N_0$, we have
\[
B_{n , q}(0) = F_{n , q}'(0) .
\]
\end{prop}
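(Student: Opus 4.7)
The plan is to use the fundamental theorem of $q$-calculus together with the elementary observation that the $q$-derivative and the ordinary derivative of a polynomial coincide at $X=0$. By construction, $F_{n,q}$ is a polynomial $q$-antiderivative of $B_{n,q}$, so $D_q F_{n,q} = B_{n,q}$; in particular, evaluating at $X = 0$ yields $B_{n,q}(0) = (D_q F_{n,q})(0)$. It remains only to verify that $(D_q F_{n,q})(0) = F_{n,q}'(0)$, which I would obtain as a special case of a general lemma applied to the polynomial $F_{n,q}$.

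The lemma in question asserts that for every $P \in \R[X]$, $(D_q P)(0) = P'(0)$. I would prove it in either of two short ways. First, by $\R$-linearity of both operators it suffices to check the identity on the monomial basis, and a direct computation gives $D_q(X^k) = [k]_q X^{k-1}$ for $k \geq 1$ (and $0$ for $k = 0$), which at $X = 0$ equals $1$ when $k = 1$ and $0$ otherwise—matching $(X^k)'(0)$ in every case. Alternatively, writing $P(X) = P(0) + P'(0) X + X^2 R(X)$ for some $R \in \R[X]$, the ratio $\frac{P(qX) - P(X)}{(q - 1) X}$ simplifies to $P'(0) + X \cdot \frac{q^2 R(qX) - R(X)}{q - 1}$, whose value at $X = 0$ is exactly $P'(0)$.

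No substantive obstacle is expected: the proposition is a formal consequence of the very definition of $F_{n,q}$ as a $q$-antiderivative of $B_{n,q}$, combined with the fact above. The only mild subtlety is that the expression $\frac{P(qX) - P(X)}{(q - 1) X}$ defining $D_q P(X)$ is indeterminate at $X = 0$; however, since $D_q P$ is already known to be a polynomial, its value at $0$ is unambiguously the constant term of that polynomial, which is what the computation above extracts.
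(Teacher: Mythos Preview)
Your proposal is correct. The key identity you isolate, $(D_q P)(0)=P'(0)$ for any polynomial $P$, is exactly what is needed, and both justifications you offer (checking on the monomial basis, or expanding $P(X)=P(0)+P'(0)X+X^2R(X)$) are valid and completely elementary.

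The paper reaches the same conclusion by a slightly different route: it writes $B_{n,q}(0)=\lim_{X\to 0}\dfrac{F_{n,q}(qX)-F_{n,q}(X)}{(q-1)X}$ and then applies l'H\^opital's rule to this $0/0$ limit, obtaining $\dfrac{qF_{n,q}'(0)-F_{n,q}'(0)}{q-1}=F_{n,q}'(0)$. Your argument is more algebraic and arguably cleaner, since it avoids invoking an analytic theorem for what is really a statement about the constant term of a polynomial; it also packages the fact as a reusable lemma. The paper's approach, on the other hand, is a one-line computation once l'H\^opital is accepted. Both are short and either would be perfectly acceptable here.
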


\begin{proof}
Given $n \in \N_0$, we have
\[
B_{n , q}(0) = \lim_{X \to 0} B_{n , q}(X) = \lim_{X \to 0} D_q F_{n , q}(X) = \lim_{X \to 0} \dfrac{F_{n , q}(q X) - F_{n , q}(X)}{(q - 1) X} .
\]
This limit is of indeterminate form $\frac{0}{0}$. Applying l'Hôpital's rule, we get
\[
B_{n , q}(0) = \lim_{X \to 0} \dfrac{q F_{n , q}'(q X) - F_{n , q}'(X)}{q - 1} = F_{n , q}'(0) ,
\]
as required.
\end{proof}

We are now ready to state and prove our main results connecting the polynomials $B_{n , q}(X)$ and their $q$-antiderivatives $F_{n , q}(X)$ with the Carlitz $q$-polynomials $\eta_{n , q}(X)$ and $\beta_{n , q}(X)$. We begin with the following key theorem.

\begin{thm}\label{t1}
For all $n \in \N_0$, we have
\[
F_{n , q}\left([X]_q\right) = \dfrac{\eta_{n + 1 , q}(X) - \eta_{n + 1 , q}}{n + 1} .
\]
\end{thm}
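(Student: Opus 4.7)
The plan is to evaluate the defining relation \eqref{eq1} at the points $X=[k]_q$, telescope the resulting identities, and then recognize the sum as an instance of Proposition \ref{p2}.

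First I would rewrite \eqref{eq1} in terms of $F_{n,q}$: since $F_{n,q}$ is the $q$-antiderivative of $B_{n,q}$ vanishing at $0$, the fundamental theorem of $q$-calculus gives
\[
F_{n,q}(qX+1) - F_{n,q}(X) = (q-1)X^{n+1} + X^n
\]
for all $X$ at which the polynomial identity applies (and hence everywhere). Specializing to $X = [k]_q$ and using the familiar identity $q[k]_q + 1 = [k+1]_q$, this becomes
\[
F_{n,q}([k+1]_q) - F_{n,q}([k]_q) = (q-1)[k]_q^{n+1} + [k]_q^n .
\]

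Next I would sum this relation telescopically from $k = 0$ to $k = N-1$. Since $F_{n,q}(0) = 0$, the left-hand side collapses to $F_{n,q}([N]_q)$. For the right-hand side, the key algebraic simplification is the identity $q^k = 1 + (q-1)[k]_q$ (immediate from the definition of $[k]_q$), which yields
\[
(q-1)[k]_q^{n+1} + [k]_q^n = \bigl(1 + (q-1)[k]_q\bigr)[k]_q^n = q^k [k]_q^n .
\]
Consequently $F_{n,q}([N]_q) = \sum_{k=0}^{N-1} q^k [k]_q^n$, and applying Proposition \ref{p2} with the substitution $n \mapsto n+1$ (which is a positive integer for every $n \in \N_0$) gives exactly
\[
F_{n,q}([N]_q) = \frac{\eta_{n+1,q}(N) - \eta_{n+1,q}}{n+1} \qquad (\forall\, N \in \N) .
\]

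Finally I would upgrade this identity from integer values $X = N$ to the level of $q$-polynomials. Both sides are polynomials in $q^X$: the right-hand side by the definition \eqref{eq4} of $\eta_{n+1,q}$, and the left-hand side because $F_{n,q}(X)$ is a classical polynomial and $[X]_q$ is itself a polynomial in $q^X$. Since they agree at the infinitely many (pairwise distinct, as $q > 0$, $q \neq 1$) points $X = N$, $N \in \N$, they coincide as $q$-polynomials, which gives the stated equality.

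The steps are essentially all algebraic bookkeeping; the only nontrivial observation is the collapse $(q-1)[k]_q^{n+1} + [k]_q^n = q^k[k]_q^n$, which is precisely what makes the right-hand side of \eqref{eq1} match the summand appearing in Proposition \ref{p2}. Once this is noticed, no serious obstacle remains.
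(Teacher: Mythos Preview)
Your proof is correct and follows essentially the same route as the paper: evaluate \eqref{eq1} at $X=[k]_q$, use $q[k]_q+1=[k+1]_q$ together with the collapse $(q-1)[k]_q^{n+1}+[k]_q^n=q^k[k]_q^n$, telescope to obtain $F_{n,q}([N]_q)=\sum_{k=0}^{N-1}q^k[k]_q^n$, apply Proposition~\ref{p2}, and then pass from $N\in\N$ to the $q$-polynomial identity by the infinitely-many-points argument. The only cosmetic difference is that you invoke the fundamental theorem of $q$-calculus up front to write the defining integral as $F_{n,q}(qX+1)-F_{n,q}(X)$, whereas the paper keeps the integral notation and sums the integrals directly; the content is the same.
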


\begin{proof}
Fix $n \in \N_0$. For $k \in \N_0$, evaluate \eqref{eq1} at $X = [k]_q$. Since $q [k]_q + 1 = [k + 1]_q$ and $(q - 1) [k]_q^{n + 1} + [k]_q^n = q^k [k]_q^n$, we obtain
\[
\int_{[k]_q}^{[k + 1]_q} B_{n , q}(t) \, d_q t = q^k [k]_q^n .
\]
Summing from $k = 0$ to $k = N - 1$ for $N \in \N$, we get
\[
F_{n , q}([N]_q) = \int_{0}^{[N]_q} B_{n , q}(t) \, d_q t = \sum_{k = 0}^{N - 1} q^k [k]_q^n .
\]
By Proposition \ref{p2}, this equals
\[
F_{n , q}\left([N]_q\right) = \dfrac{\eta_{n + 1 , q}(N) - \eta_{n + 1 , q}}{n + 1}
\]
for all $N \in \N$. This shows that the $q$-polynomials $F_{n , q}([X]_q)$ and $\frac{\eta_{n + 1 , q}(X) - \eta_{n + 1 , q}}{n + 1}$ take the same values on $\N$, implying that they are identical; that is,
\[
F_{n , q}\left([X]_q\right) = \dfrac{\eta_{n + 1 , q}(X) - \eta_{n + 1 , q}}{n + 1} ,
\]
as required.
\end{proof}

\begin{coll}\label{c1}
For all $n \in \N_0$, we have
\[
F_{n , q}'\left([X]_q\right) = \beta_{n , q}(X) .
\]
\end{coll}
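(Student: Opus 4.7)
The plan is to differentiate the identity of Theorem \ref{t1} with respect to $X$ (the ordinary derivative, not the $q$-derivative) and simplify the two sides using the chain rule and Proposition \ref{p3}.

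Concretely, I would start from
\[
F_{n , q}\bigl([X]_q\bigr) \;=\; \dfrac{\eta_{n + 1 , q}(X) - \eta_{n + 1 , q}}{n + 1} ,
\]
and differentiate both sides with respect to $X$. On the left, the chain rule gives $F_{n , q}'\bigl([X]_q\bigr) \cdot \frac{d}{dX}[X]_q = F_{n , q}'\bigl([X]_q\bigr) \cdot \frac{q^X \log q}{q - 1}$, since $[X]_q = \frac{q^X - 1}{q - 1}$. On the right, Proposition \ref{p3} yields
\[
\frac{1}{n + 1}\, \eta_{n + 1 , q}'(X) \;=\; \frac{\log q}{q - 1}\, q^X \, \beta_{n , q}(X) .
\]
Equating and canceling the common nonzero factor $\frac{q^X \log q}{q - 1}$ (nonzero because $q > 0$, $q \ne 1$) produces exactly $F_{n , q}'\bigl([X]_q\bigr) = \beta_{n , q}(X)$.

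There is no real obstacle here, but one small subtlety worth flagging: the identity in Theorem \ref{t1} is an equality of $q$-polynomials in $X$, and I must interpret both sides as ordinary real-analytic functions of the real variable $X$ before invoking the ordinary derivative and the chain rule. This is legitimate because both sides are polynomial expressions in $q^X$, hence smooth functions of $X \in \R$, and two $q$-polynomials that agree as polynomial expressions agree as functions. Once the functional identity of derivatives is established, the resulting equality $F_{n , q}'([X]_q) = \beta_{n , q}(X)$ is again an equality of $q$-polynomials in $X$, which is the form the corollary claims.
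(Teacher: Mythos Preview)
Your proof is correct and follows essentially the same approach as the paper: differentiate the identity of Theorem \ref{t1} with respect to $X$, use the chain rule on the left and Proposition \ref{p3} on the right, then cancel the common nonzero factor $\frac{\log q}{q-1}\,q^X$. Your write-up is in fact slightly more detailed than the paper's, which presents the same computation in a single line.
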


\begin{proof}
Fix $n \in \N_0$. Differentiating the formula in Theorem \ref{t1} and applying Proposition \ref{p3}, we obtain
\[
\frac{\log q}{q - 1} q^X F_{n , q}'\left([X]_q\right) = \frac{\log q}{q - 1} q^X \beta_{n , q}(X) ,
\]
implying the required result.
\end{proof}

\begin{coll}\label{c2}
For all $n \in \N_0$, we have
\[
B_{n , q} = \beta_{n , q} .
\]
On the other words, our $q$-Bernoulli numbers coincide with the Carlitz $q$-Bernoulli numbers.
\end{coll}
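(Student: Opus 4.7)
The statement is essentially a two-step chain that combines the two results that immediately precede it, so the plan is very short. By our definition of the $q$-Bernoulli numbers we have $B_{n , q} = B_{n , q}(0)$, so it suffices to interpret $B_{n , q}(0)$ in terms of the Carlitz $q$-polynomial $\beta_{n , q}(X)$.

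First, I would invoke Proposition \ref{p4} to replace $B_{n , q}(0)$ by the classical derivative $F_{n , q}'(0)$ of the $q$-antiderivative $F_{n , q}$. Next, I would specialize the identity of Corollary \ref{c1}, namely $F_{n , q}'([X]_q) = \beta_{n , q}(X)$, at $X = 0$. Since $[0]_q = \frac{q^0 - 1}{q - 1} = 0$, this yields $F_{n , q}'(0) = \beta_{n , q}(0) = \beta_{n , q}$, and combining the two steps gives $B_{n , q} = \beta_{n , q}$, as desired.

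There is no real obstacle: the only point that deserves a word of care is the harmless remark that $[0]_q = 0$, which is what allows the substitution $X = 0$ in Corollary \ref{c1} to produce the value of $F_{n , q}'$ at $0$ rather than at some nontrivial point. All the technical content (the existence and properties of $F_{n , q}$, the l'Hôpital argument relating $B_{n , q}(0)$ to $F_{n , q}'(0)$, and the identification of $F_{n , q}'([X]_q)$ with $\beta_{n , q}(X)$) has already been carried out in Propositions \ref{p3}, \ref{p4} and Theorem \ref{t1} together with Corollary \ref{c1}, so this corollary is simply their specialization at the origin.
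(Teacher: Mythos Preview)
Your proposal is correct and follows exactly the paper's own argument: take $X = 0$ in Corollary \ref{c1} (using $[0]_q = 0$) and combine with Proposition \ref{p4}. There is nothing to add.
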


\begin{proof}
Take $X = 0$ in the formula of Corollary \ref{c1} and use Proposition \ref{p4}.
\end{proof}

We conclude this paper by proving that the polynomials $B_{n , q}(X)$ ($n \in \N_0$) are genuine $q$-analogues of the classical Bernoulli polynomials; that is, $B_{n , q}(X)$ tends to $B_n(X)$ as $q \to 1$.

\begin{thm}\label{t2}
For all $n \in \N_0$, we have
\[
\lim_{q \to 1} B_{n , q}(X) = B_n(X) .
\]
\end{thm}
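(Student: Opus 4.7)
The plan is to realize $B_{n,q}(X)$ as the unique preimage of $X^n$ under a linear operator $\Phi_q$ on $\R_n[X]$ whose matrix depends continuously on $q$, and which at $q = 1$ reduces to the classical operator that characterizes $B_n(X)$. The limit will then follow from a standard continuity-of-inverse argument.

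Fix $n \in \N_0$ and let $\pi: \R_{n+1}[X] \to \R_n[X]$ be the linear projection that deletes the coefficient of $X^{n+1}$. For $q > 0$ define
$$
\Phi_q: \R_n[X] \longrightarrow \R_n[X], \qquad P \longmapsto \pi\!\left(\int_X^{qX+1} P(t) \, d_q t\right),
$$
with the convention that at $q = 1$ the Jackson integral is interpreted as the ordinary Riemann integral, so that $\Phi_1(P) = \int_X^{X+1} P(t) \, dt$. Using the elementary evaluation $\int_X^{qX+1} t^k \, d_q t = \frac{(qX+1)^{k+1} - X^{k+1}}{[k+1]_q}$, I would compute the matrix of $\Phi_q$ in the basis $(X^k)_{0 \le k \le n}$ and verify that every entry is a rational function of $q$ with no pole at $q = 1$ (immediate, since $[k+1]_q \to k+1 > 0$).

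Next, applying $\pi$ to both sides of the defining identity \eqref{eq1} yields $\Phi_q\bigl(B_{n,q}(X)\bigr) = X^n$ for $q \neq 1$, while the classical characterization \eqref{eq3} reads $\Phi_1\bigl(B_n(X)\bigr) = X^n$; the uniqueness clause in \eqref{eq3} forces $\Phi_1$ to be invertible on $\R_n[X]$. Consequently $\det \Phi_q$ is continuous in $q$ and nonzero at $q = 1$, hence nonzero on an open neighborhood of $q = 1$ on which $\Phi_q^{-1}$ varies continuously. Therefore
$$
B_{n,q}(X) \;=\; \Phi_q^{-1}(X^n) \;\longrightarrow\; \Phi_1^{-1}(X^n) \;=\; B_n(X) \qquad (q \to 1),
$$
with coefficientwise convergence, which is the claimed conclusion.

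The only genuinely delicate point is the degree mismatch between $\varphi_n$ from Proposition \ref{p1} (whose image sits in degree $n+1$) and the classical operator (whose image lies in degree $n$). The projection $\pi$ resolves this singular perturbation cleanly because the offending $X^{n+1}$ term on the right of \eqref{eq1} has leading coefficient exactly $(q - 1)$, which vanishes at $q = 1$; discarding it therefore loses no information in the limit and permits $\Phi_q$ to extend continuously and invertibly across $q = 1$.
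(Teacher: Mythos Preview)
Your proof is correct and takes a genuinely different route from the paper's. The paper proceeds by first deriving, via Corollary~\ref{c1} and Formula~\eqref{eq7}, an explicit closed form
\[
B_{n,q}(X) \;=\; \sum_{k=0}^{n}\sum_{i=0}^{k} \binom{n}{k}\binom{k}{i}\,\beta_{k,q}\,(q-1)^i\,\frac{[n+i-k+1]_q}{n+i-k+1}\,X^{\,n+i-k},
\]
and then lets $q \to 1$ termwise, invoking Carlitz's result $\beta_{k,q}\to B_k$ together with $[m]_q\to m$. Your argument bypasses the Carlitz $q$-polynomials entirely: you repackage the defining relation~\eqref{eq1} as $\Phi_q(B_{n,q})=X^n$ for a finite-dimensional linear operator $\Phi_q$ whose matrix entries are visibly continuous at $q=1$, and deduce the limit from continuity of the matrix inverse. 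The trade-off is clear: the paper's method yields an explicit formula for $B_{n,q}(X)$ as a by-product and exhibits its link to $\beta_{n,q}(X)$, whereas your method is shorter, self-contained, and conceptually transparent but produces no formula. One small remark: your inference that ``the uniqueness clause in~\eqref{eq3} forces $\Phi_1$ to be invertible'' is correct (a nonzero kernel element added to $B_n$ would contradict uniqueness of the preimage of $X^n$), but a one-line justification to that effect would tighten the write-up.
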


\begin{proof}
Fix $n \in \N_0$. By \eqref{eq7}, we have
\begin{align*}
\beta_{n , q}(t) & = \sum_{k = 0}^{n} \binom{n}{k} \beta_{k , q} q^{k t} [t]_q^{n - k} \\
& = \sum_{k = 0}^{n} \binom{n}{k} \beta_{k , q} \left((q - 1) [t]_q + 1\right)^k [t]_q^{n - k} .
\end{align*}
By identifying this with the formula in Corollary \ref{c1}, we derive that:
\begin{align*}
F_{n , q}'(t) & = \sum_{k = 0}^{n} \binom{n}{k} \beta_{k , q} \left((q - 1) t + 1\right)^k t^{n - k} \\
& = \sum_{k = 0}^{n} \binom{n}{k} \beta_{k , q} \left(\sum_{i = 0}^{k} \binom{k}{i} (q - 1)^i t^i\right) t^{n - k} \quad (\text{by the binomial formula}) \\
& = \sum_{k = 0}^{n} \sum_{i = 0}^{k} \binom{n}{k} \binom{k}{i} \beta_{k , q} (q - 1)^i t^{n + i - k} .
\end{align*}
Then, integrating from $0$ to $X$ yields (since $F_{n , q}(0) = 0$):
\[
F_{n , q}(X) = \sum_{k = 0}^{n} \sum_{i = 0}^{k} \binom{n}{k} \binom{k}{i} \beta_{k , q} (q - 1)^i \dfrac{X^{n + i - k + 1}}{n + i - k + 1} .
\]
Finally, Applying the $q$-derivative $D_q$ to both sides, we obtain
\[
B_{n , q}(X) = \sum_{k = 0}^{n} \sum_{i = 0}^{k} \binom{n}{k} \binom{k}{i} \beta_{k , q} (q - 1)^i \dfrac{[n + i - k + 1]_q}{n + i - k + 1} X^{n + i - k} .
\]
As $q \to 1$, we have $\beta_{k , q} \to B_k$ and $[n + i - k + 1]_q \to n + i - k + 1$ (for all integers $k$ and $i$ such that $0 \leq i \leq k \leq n$); hence
\[
\lim_{q \to 1} B_{n , q}(X) = \sum_{k = 0}^{n} \binom{n}{k} B_k X^{n - k} = B_n(X)
\]
(according to \eqref{eq2}). This completes the proof.
\end{proof}

\begin{rmk}
From Corollary \ref{c1}, the connection between our polynomials $B_{n , q}(X)$ and the Carlitz $q$-Bernoulli polynomials $\beta_{n , q}(X)$ is given by
\[
\beta_{n , q}(X) = \left(\int_{0}^{s} B_{n , q}(t) \, d_q t\right)' \bigg\vert_{s = [X]_q}
\]
for all $n \in \N_0$. A similar formula for $\beta_{n , q}(X)$ ($n \in \N_0$), derived directly from its definition \eqref{eq5}, is given by:
\[
\beta_{n , q}(X) = \left(\int_{0}^{s} \left(\dfrac{t - 1}{q - 1}\right)^n \, d_q t\right)' \bigg\vert_{s = q^X} \quad (\forall n \in \N_0) .
\]
\end{rmk}

\rhead{\it References}

\end{document}